\title{Set systems without a $3$-simplex}
\author{Michael E. Picollelli\footnote{
\small Department of Electrical \& Computer Engineering, University of Delaware, Newark, DE, USA.  E-mail: \texttt{mpicolle@udel.edu}}}
\date{}
\newcommand{\bin}[2]{\binom{#1}{#2}}
\newcommand{\lp}{\left(}
\newcommand{\rp}{\right)}
\newtheorem{theorem}{Theorem}
\newtheorem{lemma}{Lemma}
\newtheorem{claim}{Claim}
\newtheorem{conjecture}{Conjecture}
\newcommand{\ca}[1]{{\cal #1}}
\begin{document}

\maketitle

\begin{abstract}
A $3$-simplex is a collection of four sets $A_1,\ldots,A_4$ with empty intersection such that any three of them have nonempty intersection.  We show that the maximum size of a set system on $n$ elements without a $3$-simplex is $2^{n-1} + \binom{n-1}{0} + \binom{n-1}{1} + \binom{n-1}{2}$ for all $n \ge 1$, with equality only achieved by the family of sets either containing a given element or of size at most $2$.  This extends a result of Keevash and Mubayi, who showed the conclusion for $n$ sufficiently large.
\end{abstract}

\section{Introduction}

Throughout this paper $X$ will be an $n$-element set. For an integer $i\ge 0$, let $X^{(i)} = \{A \subseteq X : |A| = i\}$, and $X^{(\le i)} = \cup_{0 \le j \le i} X^{(j)}$.  If $\ca{F} \subseteq X^{(\le n)}$ and $x \in X$, we let $\ca{F}_x = \{A \in \ca{F} : x \in A\}$ and $\ca{F}-x = \ca{F}\setminus \ca{F}_x$.

A $d$-dimensional simplex, or $d$-simplex, is a collection of $d+1$ sets $A_1,\ldots,A_{d+1}$ such that $\cap_{i=1}^{d+1} A_i = \emptyset$ but $\cap_{i \ne j} A_i \ne \emptyset$ for $1 \le j \le d+1$.  For positive integers $n,d,r$, let
\begin{eqnarray*}
f(n,d) &=& \max\{ \ca{F} \subseteq X^{(\le n)} : \ca{F} \text{ is } d\text{-simplex-free}\}, \text{ and }\\
f_r(n,d) &=& \max\{ \ca{F} \subseteq X^{(r)} : \ca{F} \text{ is } d\text{-simplex-free}\}.
\end{eqnarray*}

The problem of determining $f(n,d)$ and $f_r(n,d)$ can be traced to some of the most fundamental results in extremal combinatorics.  As a $1$-simplex is a pair of nonempty disjoint sets, it is easy to see that $f(n,1) = 2^{n-1} + 1$, while the solution to determining $f_r(n,1)$ comes from the celebrated Erd\H os-Ko-Rado Theorem:

\begin{theorem}[Erd\H os-Ko-Rado \cite{EKR}]
Let $n \ge 2r$ and suppose $\ca{F} \subseteq X^{(r)}$ is intersecting: then $|\ca{F}| \le \bin{n-1}{r-1}$.  If $n > 2r$ and equality holds, then $\ca{F} = X^{(r)}_x$ for some $x \in X$.
\end{theorem}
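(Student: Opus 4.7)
The plan is to apply Katona's cyclic permutation method. For a cyclic ordering $\pi$ of $X$, call an $r$-set an $r$-arc of $\pi$ if its elements appear consecutively on $\pi$; each $\pi$ has exactly $n$ such arcs. I would carry out the proof in three steps: a local bound on the arcs of an intersecting family on a single cycle, a double count for the global cardinality bound, and a separate analysis for uniqueness when $n > 2r$.

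For the local claim, fix an intersecting $\mathcal{F} \subseteq X^{(r)}$ and a cyclic ordering $\pi$ with arcs labelled $A_0,\ldots,A_{n-1}$ cyclically, and assume without loss of generality that $A_0 \in \mathcal{F}$. Any arc meeting $A_0$ is among $A_{-(r-1)},\ldots,A_{r-1}$; I pair $A_i$ with $A_{i-r}$ for $1 \le i \le r-1$. Since $n \ge 2r$, the two arcs in each such pair are disjoint, so $\mathcal{F}$ contains at most one of them. Including $A_0$, this gives at most $r$ arcs of $\pi$ in $\mathcal{F}$.

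Next, I would double-count pairs $(\pi, A)$ where $A \in \mathcal{F}$ is an $r$-arc of $\pi$. Each $A \in \mathcal{F}$ is an arc in exactly $r!(n-r)!$ of the $(n-1)!$ cyclic orderings, so the local bound gives
\[
|\mathcal{F}| \cdot r!(n-r)! \;\le\; (n-1)! \cdot r,
\]
which rearranges to $|\mathcal{F}| \le \binom{n-1}{r-1}$.

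If equality holds with $n > 2r$, the averaging in the double count forces every $\pi$ to contain exactly $r$ arcs of $\mathcal{F}$, and retracing the local argument with the intersecting property shows these $r$ arcs must be $r$ consecutive arcs on $\pi$, whose common intersection is a single element $x_\pi \in X$. The main obstacle is promoting this per-cycle pivot to a single global pivot. I would handle this by comparing two cyclic orderings that differ by a single adjacent transposition and showing that $x_\pi$ cannot change under such a swap (the swapped pair, if placed across the pivot's position, would break the block-of-$r$-consecutive-arcs structure for one of the orderings), then invoking connectedness of the adjacent-transposition graph on cyclic orderings to conclude $x_\pi \equiv x$ for a single $x \in X$. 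This forces $\mathcal{F} \subseteq X^{(r)}_x$, and the cardinality bound then gives $\mathcal{F} = X^{(r)}_x$.
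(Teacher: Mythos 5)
The paper does not prove this statement at all: it quotes the Erd\H os--Ko--Rado theorem as a classical result with a citation and uses it as a black box (only the bound, in the proof of Claim~\ref{clm:d=1}). So there is nothing in the paper to compare against, and you are supplying a proof from scratch. Your choice of Katona's cycle method is the standard short route, and the first two steps are correct and complete: the pairing $\{A_i, A_{i-r}\}$ for $1 \le i \le r-1$ covers all arcs meeting $A_0$ other than $A_0$ itself, each pair consists of two disjoint arcs precisely because $n \ge 2r$, and the double count with $r!(n-r)!$ orderings per arc and $(n-1)!$ cyclic orderings gives $|\mathcal{F}| \le \binom{n-1}{r-1}$.

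The uniqueness part, however, has two genuine gaps. First, the claim that the $r$ arcs of $\mathcal{F}$ on a given cycle ``must be $r$ consecutive arcs'' does not follow from pairwise intersection alone when $n$ is only slightly larger than $2r$: two arcs $[i,i+r)$ and $[j,j+r)$ intersect whenever the \emph{cyclic} distance of $i$ and $j$ is less than $r$, and for $2r < n < 3r-2$ this can happen by wrapping around. For instance, on $\mathbb{Z}_{11}$ with $r=5$ the arcs starting at $0,4,8$ pairwise intersect yet have empty common intersection, so ``pairwise intersecting arcs'' need not share a point; forcing consecutiveness requires using that there are \emph{exactly} $r$ arcs, one from each pair, and a more careful index analysis (or a different argument) in this range of $n$. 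Second, your adjacent-transposition step asserts that the pivot $x_\pi$ cannot change under a swap, but the only dangerous case --- swapping the pivot $u$ with its cyclic neighbour $v$ --- is exactly the one your parenthetical does not address: only two arcs change as sets under such a swap, namely $S_1 \mapsto S_1 \setminus \{u\} \cup \{v\}$ and $S_2 \mapsto S_2 \setminus\{v\}\cup\{u\}$, and one checks that both orderings can retain a valid block of $r$ consecutive arcs while the pivot element changes from $u$ to $v$ (this would require $S_1$ and $S_1\setminus\{u\}\cup\{v\}$ both to lie in $\mathcal{F}$, which is not locally contradictory). Ruling this out needs an additional argument --- e.g., exhibiting a third cyclic ordering in which two such sets force a violation, or replacing the transposition scheme with the standard finish that fixes a set $B \in \mathcal{F}$ avoiding the pivot of some ordering and builds an ordering in which $B$ is an arc. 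As written, the equality case is a plausible plan rather than a proof.
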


For $r=d=2$, the forbidden family is a triangle (in graphs), and thus $f_2(n,2) = \lfloor n^2/4\rfloor$, a special case of Tur\'an's theorem and a cornerstone of extremal graph theory.   Erd\H os later posed the question of determining the size of the largest $r$-uniform hypergraph without a triangle ($2$-simplex), i.e. $f_r(n,2)$.  Chv\'atal \cite{C} solved the case $r=3$ by showing the stronger result that for $n \ge r + 2 \ge 5$, $f_r(n,r-1) = \bin{n-1}{r-1}$ with equality only for $\ca{F} = X_x^{(r)}$ for some $x \in X$.  Chv\'atal further conjectured the following:

\begin{conjecture}[Chv\'atal \cite{C}]\label{conj:chvatal}
Let $r \ge d+1 \ge 3$, $n \ge r(d+1)/d$, and $\ca{F} \subseteq X^{(r)}$ with no $d$-simplex.  Then $|\ca{F}| \le \bin{n-1}{r-1}$, with equality only if $\ca{F} = X^{(r)}_x$ for some $x \in X$.
\end{conjecture}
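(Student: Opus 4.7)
The plan is to pursue the standard Frankl--F\"uredi-type strategy: apply shifting to reduce to the left-compressed case, then induct on $n$, splitting $\ca{F}$ by a distinguished element $x \in X$ into $\ca{F}_x$ and $\ca{F} - x$. The first step is to verify that left-shifts preserve $d$-simplex-freeness (the standard argument: if shifting creates a simplex $A_1,\ldots,A_{d+1}$, then an un-shifted version already lives in $\ca{F}$). Having compressed $\ca{F}$, fix $x = 1$; then trivially $|\ca{F}_1| \le \bin{n-1}{r-1}$, and the whole task reduces to controlling $\ca{F} - 1$.

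The central step would be to build an injection from $\ca{F} - 1$ into $X_1^{(r)} \setminus \ca{F}_1$, so that each "extra" set $B \in \ca{F} - 1$ is charged to a distinct missing star-set. Concretely, for each such $B$ one wants to exhibit sets $C_1,\ldots,C_d \in \ca{F}_1$ whose pairwise and triple-wise intersection patterns, combined with $B$, would form a $d$-simplex --- the absence of this simplex then forces one of the $C_i$, or a suitably constructed relative, to be missing from $\ca{F}_1$. Here the left-compressedness is crucial: it lets one build the $C_i$ by shifting indices inside $B$ out to $\{1\}$-containing supersets, making both the injection and its inverse explicit. If this injection can be produced, summing yields $|\ca{F}| \le |\ca{F}_1| + |X_1^{(r)} \setminus \ca{F}_1| = \bin{n-1}{r-1}$, and equality traces back through the injection to force $\ca{F} - 1 = \emptyset$ and $\ca{F}_1 = X_1^{(r)}$.

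The main obstacle --- and precisely why this conjecture remains open --- is that the natural injection has slack only when $n$ is comfortably larger than $r$. In the large-$n$ regime, Frankl--F\"uredi's delta-system machinery supplies plenty of "spare" elements outside any bounded-size kernel, so the required $C_i$ can be located. Near the threshold $n = r(d+1)/d$, however, one has essentially no room to maneuver: shifting concentrates the family tightly, and the certificates for a forbidden $d$-simplex overlap in ways that destroy injectivity. Overcoming this would presumably require a stability-type input that says $\ca{F}$ is either close to a star (and handled by a perturbation argument) or structurally far from one (and bounded by a direct counting argument not going through the star). I would expect my plan to recover Chv\'atal's $r = d+1$ result and the Frankl--F\"uredi large-$n$ bound, and perhaps to push through for $d = 2$ or $d = 3$ via ad hoc refinements, but not to settle the full conjecture.
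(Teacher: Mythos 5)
The statement you were asked to prove is Conjecture \ref{conj:chvatal}, which this paper does not prove and which is quoted only as background: the paper cites partial results (Chv\'atal's own case $r=d+1$, Mubayi--Verstra\"ete for $d=2$, Frankl--F\"uredi and Keevash--Mubayi for $n$ large or for $r$ and $n/2-r$ bounded away from $0$) and otherwise leaves it open. So there is no proof in the paper to compare against, and your proposal is not one either --- as you yourself concede in your final sentence. The opening moves are sound and standard: left-compression preserves $d$-simplex-freeness, and $|\ca{F}_1|\le \bin{n-1}{r-1}$ is trivial. But the entire mathematical content of the argument is the injection from $\ca{F}-1$ into $X_1^{(r)}\setminus\ca{F}_1$, and you describe only the properties such an injection would need to have, never a construction: which sets $C_1,\ldots,C_d$ are chosen, what the ``suitably constructed relative'' is, and why the resulting map is injective are all unspecified. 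Your diagnosis of where this breaks --- near the threshold $n=r(d+1)/d$ the forbidden-simplex certificates overlap and injectivity is lost --- is accurate, but identifying the obstacle is not the same as overcoming it, so the proposal has a gap that is, in effect, the whole conjecture.

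It is also worth noting that the paper's actual contribution deliberately routes around the uniform problem. The inductive scheme of Lemma \ref{lem:ineq} fixes a maximum-size member $Y\in\ca{F}$, partitions the family by traces on $X\setminus Y$, and reduces a $d$-simplex-free nonuniform family to $(d-1)$-simplex-free ones, bottoming out at Erd\H os--Ko--Rado; no uniform simplex theorem beyond EKR is used anywhere. If you want a statement that yields to the elementary compression-and-counting style you are sketching, the nonuniform Theorem \ref{thm:mainresult} (or Conjecture \ref{conj:main}) is the right target; Conjecture \ref{conj:chvatal} itself has so far required delta-system or stability machinery well beyond a shift-plus-injection argument, even in the regimes where it is known.
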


Progress was made on the $d=2$ case over a number of years before it was finally settled by Mubayi and Verstra\"ete (see \cite{MV} for the result and further references).  For $d \ge 3$, Frankl and F\"uredi \cite{FF} established Conjecture \ref{conj:chvatal} for $n$ sufficiently large (see also \cite{JPY}), and, more recently, Keevash and Mubayi \cite{KM} confirmed it when $r$ and $n/2 - r$ are bounded away from $0$.

Erd\H os also posed the question of forbidding triangles in nonuniform systems, which was answered by Milner (unpublished), who showed $f(n,2) = 2^{n-1} + n$ for all $n \ge 1$.  Short proofs of the bound were also found by Lossers \cite{ELM} and by Mubayi and Verstra\"ete \cite{MV} - the latter result establishing that the unique extremal family consists of all sets either containing a given element or of size at most $1$.

For $d \ge 3$, Keevash and Mubayi completely determined $f(n,d)$ and the extremal family for $n$ sufficiently large:

\begin{theorem}[Keevash and Mubayi \cite{KM}]\label{thm:km}
Let $d \ge 2$, and suppose $\ca{F} \subseteq X^{(\le n)}$ is $d$-simplex-free, where $n$ is sufficiently large.  Then $|\ca{F}| \le 2^{n-1} + \sum_{i=0}^{d-1} \bin{n-1}{i}$, with equality if and only if $\ca{F} = X^{(\le n)}_x \cup (X \setminus \{x\})^{(\le d-1)}$ for some $x \in X$.
\end{theorem}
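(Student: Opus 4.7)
The overall strategy is to reduce the nonuniform problem to the uniform Chv\'atal-type bound layer by layer, and then extract the extremal structure via a stability/alignment step.

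First, a key structural observation: any $d$-simplex $A_1,\ldots,A_{d+1}$ forces $|A_j|\ge d$ for every $j$. Indeed, choosing $x_j \in \bigcap_{i\ne j}A_i$ for each $j$, one has $x_j\notin A_j$ (lest $x_j$ lie in the empty total intersection), so the $x_j$ are pairwise distinct, and each $A_k$ contains $\{x_j : j\ne k\}$. Hence sets of size less than $d$ cannot participate in any $d$-simplex, and $\ca{F}\cap X^{(\le d-1)}$ is entirely unconstrained, contributing at most $\sum_{i=0}^{d-1}\binom{n}{i}$ sets. The real work is to bound $\ca{G}_r := \ca{F}\cap X^{(r)}$ for each $r\ge d$.

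The central goal is to establish $|\ca{G}_r| \le \binom{n-1}{r-1}$ for every $r\ge d$, i.e.\ Conjecture \ref{conj:chvatal} at level $r$. For moderate $r$ (say $d\le r\le n/2$) this is the Frankl--F\"uredi theorem, available because $n$ is large in terms of $d$. For $r$ near $n$ one argues separately: trivially $|\ca{G}_n|\le 1$, and for $r$ just below $n$ the interaction with smaller layers---in particular with sets of size $<d$, which one may assume are all present at the extremum---forces the almost-full sets of $\ca{G}_r$ to share a common element, again yielding $\binom{n-1}{r-1}$. Summing,
\[
 |\ca{F}| \;\le\; \sum_{i=0}^{d-1}\binom{n}{i} \;+\; \sum_{r=d}^{n}\binom{n-1}{r-1},
\]
and a Pascal expansion of the first sum rewrites the right-hand side as $2^{n-1}+\sum_{i=0}^{d-1}\binom{n-1}{i}$, matching the stated bound.

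For uniqueness, invoke stability companions of the per-layer bounds: at each $r\ge d$, equality forces $\ca{G}_r = X^{(r)}_{x_r}$ for some $x_r\in X$, while at each $r<d$ every subset of that size must lie in $\ca{F}$. If two centers $x_r\ne x_{r'}$ disagreed, a set $A\in\ca{G}_r$ containing $x_r$ but avoiding $x_{r'}$ could be combined with carefully chosen sets drawn from $\ca{G}_{r'}$ and from the fully-present $X^{(\le d-1)}$ to exhibit a $d$-simplex---a contradiction. Hence all centers coincide at a single $x$, and $\ca{F}$ is exactly $X^{(\le n)}_x \cup (X\setminus\{x\})^{(\le d-1)}$. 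The principal obstacle is the uniform bound in the top range of $r$: Frankl--F\"uredi is formulated for fixed $r$ with $n\to\infty$, whereas here $r$ itself may approach $n$, and $d$-simplex-freeness is not preserved under complementation. I expect the right remedy is a monotonicity/closure argument---assuming WLOG $X\in\ca{F}$ and controlling chains of near-full sets---that effectively reduces the large-$r$ layers to a small-$r$ problem on a smaller ground set, where Frankl--F\"uredi applies.
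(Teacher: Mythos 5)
This statement is the cited Keevash--Mubayi theorem; the paper gives no proof of it (it only remarks that the argument for $d\ge 3$ goes through a stability result built on their uniform theorem), so your sketch can only be judged on its own terms --- and it has a genuine gap at its central step.

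Your plan is to prove $|\ca{F}\cap X^{(r)}|\le\binom{n-1}{r-1}$ for every $r\ge d$ and sum over layers. That per-layer bound is simply false for large $r$: if $r>dn/(d+1)$, then any $d+1$ sets of size $r$ satisfy $|A_1\cap\cdots\cap A_{d+1}|\ge n-(d+1)(n-r)=(d+1)r-dn>0$, so the \emph{entire} layer $X^{(r)}$ is $d$-simplex-free and $\ca{G}_r$ may have size $\binom{n}{r}>\binom{n-1}{r-1}$ (indeed $X^{(\le d-1)}\cup\bigcup_{r>dn/(d+1)}X^{(r)}$ is $d$-simplex-free). So the top layers cannot be handled in isolation; they must be controlled through their interaction with other layers, and your closing paragraph's proposed ``monotonicity/closure'' remedy is a placeholder, not an argument. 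A second, independent problem sits in the middle range: Frankl--F\"uredi \cite{FF} is an asymptotic result for $r$ fixed (or small relative to $n$), so it does not supply the uniform bound when $r$ grows proportionally to $n$, and even the Keevash--Mubayi uniform theorem requires $r$ bounded away from $n/2$; the layers with $r$ near and above $n/2$ are precisely where the known uniform results run out. These two gaps are why the actual proof in \cite{KM} is organized around a stability theorem (a layer of near-extremal size must be approximately a star, and that approximate structure is then propagated across layers) rather than a layerwise summation. Your uniqueness step inherits the same defect: once the per-layer bounds fail individually, equality in the total no longer forces each $\ca{G}_r$ to be a star, so the ``aligning the centers $x_r$'' argument has no starting point.
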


\noindent The proof of Theorem \ref{thm:km} for $d \ge 3$ relies on a stability result that in turn relies on their solution to the uniform problem mentioned above.  Our contribution is to completely determine $f(n,3)$ and the associated extremal family using a simpler inductive argument.

\begin{theorem}\label{thm:mainresult}
For $n \ge 1$, suppose $\ca{F} \subseteq X^{(\le n)}$ is $3$-simplex-free.  Then $|\ca{F}| \le 2^{n-1} +\sum_{i=0}^2 \bin{n-1}{i}$, with equality if and only if $\ca{F} = X^{(\le n)}_x \cup (X\setminus \{x\})^{(\le 2)}$ for some $x \in X$.
\end{theorem}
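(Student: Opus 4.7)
The plan is to proceed by induction on $n$. The base cases $n \le 3$ are trivial since for these $n$ one checks directly that $g(n) := 2^{n-1}+\bin{n-1}{0}+\bin{n-1}{1}+\bin{n-1}{2}=2^n$, so the bound $|\ca{F}| \le g(n)$ is automatic. For the inductive step I first reduce to sets of size at least $3$ by showing that no set of size $\le 2$ can participate in a $3$-simplex: the empty set would force some triple intersection to vanish; a singleton $\{y\}$ forces $y$ into the other two sets of each triple, hence into the fourfold intersection; and for a pair $\{y,z\}$, a pigeonhole shows each of $y$ and $z$ lies in at most two of $A_2,A_3,A_4$ (else the fourfold intersection would be nonempty), so each covers at most one of the three pairs $\{j,k\}\subset\{2,3,4\}$, leaving some triple intersection $\{y,z\}\cap A_j\cap A_k$ empty. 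Thus $\ca{F}$ is $3$-simplex-free iff $\ca{F}^{(\ge 3)}$ is, and in any extremal family we may assume $X^{(\le 2)}\subseteq \ca{F}$. The theorem therefore reduces to showing that any $3$-simplex-free $\ca{G}\subseteq X^{(\ge 3)}$ has $|\ca{G}| \le 2^{n-1}-n$, with equality only for $\ca{G}=X^{(\ge 3)}_x$ for some $x \in X$.

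To prove this reduced statement inductively, I split $\ca{G}$ via a chosen element $y \in X$ into the link $\ca{L} = \{A \setminus \{y\} : y \in A \in \ca{G}\}$ and the antilink $\ca{M} = \ca{G}-y$, both viewed as families on $X \setminus \{y\}$, with $\ca{L}$ consisting of sets of size $\ge 2$ and $\ca{M}$ of size $\ge 3$, so $|\ca{G}|=|\ca{L}|+|\ca{M}|$. Since $\ca{M}$ is $3$-simplex-free on the $n-1$ points of $X\setminus\{y\}$, induction yields $|\ca{M}| \le 2^{n-2}-(n-1)$. If $\ca{M}=\emptyset$, then $\ca{G}\subseteq X^{(\ge 3)}_y$ and we are done; otherwise it suffices to show $|\ca{L}|\le 2^{n-2}-1$. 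The key structural observation is: for any $B \in \ca{M}$, the trace family $\{A' \cap B : A' \in \ca{L}\}$ must be $2$-simplex-free on $B$, since three sets $A_1,A_2,A_3 \in \ca{G}_y$ whose traces on $B$ form a $2$-simplex would, together with $B$, form a $3$-simplex in $\ca{G}$: as $y\notin B$, the fourfold intersection equals the triple trace-intersection on $B$ (hence vanishes), while $A_1\cap A_2\cap A_3 \ni y$ and the pairwise-intersecting traces make all four triple intersections nonempty.

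I expect the main obstacle to be combining these trace constraints (together with the analogous cross-constraints arising from $3$-simplices that use two or three sets of $\ca{G}_y$) across multiple $B \in \ca{M}$ to obtain the sharp bound $|\ca{L}| \le 2^{n-2}-1$. In the clean case where $\ca{M}$ attains the inductive bound and is thus (by the inductive uniqueness) a star on $X\setminus\{y\}$ centered at some $z$, I would argue that $\ca{L}$ is itself $2$-simplex-free: given any purported $2$-simplex $A'_1,A'_2,A'_3$ in $\ca{L}$ with pairwise intersections $P_{ij}$, pick $p_{ij}\in P_{ij}$ (at most one of which can equal $z$, since $z$ belonging to two of the $P_{ij}$'s would put $z$ in $A'_1\cap A'_2\cap A'_3=\emptyset$); then $B=\{z\}\cup\{p_{ij}\}$ has size $\ge 3$ and contains $z$, so $B\in\ca{M}$, and $B$ meets each $P_{ij}$, producing the forbidden $3$-simplex. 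Milner's theorem applied to size-$\ge 2$ $2$-simplex-free families on $X \setminus \{y\}$ then yields $|\ca{L}| \le 2^{n-2}-1$, with equality only for stars. The non-maximal $\ca{M}$ regime requires additional care -- likely via an adaptive choice of $y$ or by iterating the split on $\ca{M}$ itself -- and uniqueness then follows by chasing the equality cases in the inductive hypothesis and in Milner, plus a short argument that the centers of the two extremal stars must coincide (else one exhibits a $3$-simplex in $\ca{G}$ directly).
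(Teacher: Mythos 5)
Your reduction to families of sets of size at least $3$ is correct (the verification that sets of size $\le 2$ cannot lie in a $3$-simplex is sound), and your ``clean case'' argument --- showing $\ca{L}$ is triangle-free when $\ca{M}$ is a full star, via the constructed set $B = \{z\}\cup\{p_{12},p_{13},p_{23}\}$ --- is also correct. But the proof has a genuine gap exactly where you flag it: the regime where $\ca{M}$ is nonempty but not extremal. This is not a technicality that ``additional care'' will patch within your framework, because the intermediate bound you are aiming for, $|\ca{L}| \le 2^{n-2}-1$, is simply false in general. For example, take $\ca{M} = \{B\}$ for a single $3$-set $B=\{1,2,3\}$ and let $\ca{L}$ consist of all subsets of $X\setminus\{y\}$ of size $\ge 2$ whose trace on $B$ is not $\{2,3\}$; then no three members of $\ca{G}_y$ have traces forming a $2$-simplex on $B$, so $\ca{G}$ is $3$-simplex-free, yet $|\ca{L}| = 2^{n-1}-n-2^{n-4}$, which vastly exceeds $2^{n-2}-1$ for large $n$. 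So the bound on $|\ca{L}|$ must degrade continuously as $|\ca{M}|$ shrinks, and your decomposition provides no mechanism for this trade-off: the trace constraints from a single small $B\in\ca{M}$ bound only the number of distinct traces, not the number of sets realizing them, and combining constraints across several $B$'s is precisely the hard counting problem you have deferred.

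The paper avoids this trade-off entirely by decomposing along a \emph{maximum-size} set $Y\in\ca{F}$ rather than along a vertex: partitioning $\ca{F}$ by the intersection $W$ of each member with $X\setminus Y$, the maximality of $Y$ forces every trace class $\ca{F}_W$ to consist of sets of size at most $|Y|-|W|$, which converts the ``multiple sets with the same trace'' problem into a recursive bound $f(n-k,d-1,|W|)$ on triangle-free families with restricted maximum size (the paper's Lemma 4, a strengthening of Milner's theorem, supplies exactly these numbers). If you want to salvage your vertex-link approach, you would need a quantitative statement of the form ``$|\ca{L}| \le 2^{n-1}-n-h(\ca{M})$ where $h(\ca{M})\ge |\ca{M}|$,'' and proving that is essentially the whole difficulty of the theorem; as written, the argument establishes the result only when some vertex $y$ has $\ca{G}-y$ either empty or exactly extremal.
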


\section{The Proof of Theorem \ref{thm:mainresult}}

For $d \ge 1$ and $n,k \ge 0$, let $f(n,d,k)$ be the maximum size of a $d$-simplex-free family $\ca{F} \subseteq X^{(\le n-k)}$, and let $g(n,d,k)$ be the maximum size of such a family $\ca{F}$ with $\ca{F} \cap X^{(n-k)} \ne \emptyset$.
Thus $f(n,d,k) = \max_{j \ge k} g(n,d,j)$, and as $X$ cannot lie in a $d$-simplex in $\ca{F}$, $f(n,d) = 1 + f(n,d,1)$.
We begin our arguments with a simple lemma.

\begin{lemma} \label{lem:ineq}
Let $n \ge k \ge 1$ and $d \ge 2$.  Then
\begin{equation}\label{eq:mainineq}
g(n,d,k) \le f(n-k,d)+ \sum_{i=1}^k \bin{k}{i} f(n-k,d-1,i).
\end{equation}
\end{lemma}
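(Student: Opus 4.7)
The plan is to exploit a fixed set $A \in \ca{F}$ of size $n-k$ (guaranteed by the definition of $g(n,d,k)$) together with a partition of $\ca{F}$ based on the trace onto $B := X \setminus A$, which has size $k$. Specifically, for each $S \subseteq B$, I set $\ca{F}_S = \{F \in \ca{F} : F \cap B = S\}$ and $\ca{F}_S' = \{F \setminus S : F \in \ca{F}_S\}$; then $\ca{F}_S' \subseteq A^{(\le n-k-|S|)}$ and $|\ca{F}_S| = |\ca{F}_S'|$, while $|\ca{F}|$ decomposes as $\sum_{S \subseteq B} |\ca{F}_S'|$.

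For $S = \emptyset$, the family $\ca{F}_\emptyset' \subseteq A^{(\le n-k)}$ inherits $d$-simplex-freeness from $\ca{F}$, so $|\ca{F}_\emptyset'| \le f(n-k,d)$. For nonempty $S$, my key claim is that $\ca{F}_S'$ is $(d-1)$-simplex-free, from which $|\ca{F}_S'| \le f(n-k, d-1, |S|)$ is immediate; summing the $2^k - 1$ resulting estimates and grouping by $i = |S|$ then yields the right-hand side of (\ref{eq:mainineq}).

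To establish the claim, I will suppose for contradiction that $A_1,\ldots,A_d \in \ca{F}_S'$ form a $(d-1)$-simplex in $A$ and argue that then $A, A_1 \cup S, \ldots, A_d \cup S$ form a $d$-simplex inside $\ca{F}$. Since $A_i \subseteq A$ and $S \cap A = \emptyset$, one has $A \cap (A_i \cup S) = A_i$, so $A \cap \bigcap_{i=1}^d (A_i \cup S) = \bigcap_i A_i = \emptyset$. On the other hand, dropping $A$ leaves intersection $\bigcap_i (A_i \cup S) \supseteq S \ne \emptyset$, and dropping any $A_j \cup S$ leaves $A \cap \bigcap_{i \ne j}(A_i \cup S) = \bigcap_{i \ne j} A_i$, which is nonempty by the $(d-1)$-simplex assumption. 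Distinctness of the $d+1$ sets follows from $S \ne \emptyset$ (so $A$ misses $S$ while each $A_i \cup S$ contains it) together with distinctness of the $A_i$.

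The verification in the third paragraph is the only real content; the remaining steps are bookkeeping, so I anticipate no further obstacle. The one place where care is required is the small identity $\bigcap_{i\in I}(A_i \cup S) = (\bigcap_{i\in I} A_i) \cup S$, which uses $S \subseteq (A_i \cup S)$ for each $i$ and which drives both the empty intersection of all $d+1$ sets and the nonempty intersections of any $d$ of them.
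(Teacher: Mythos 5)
Your proposal is correct and follows essentially the same route as the paper: fix a maximum-size member $Y$ (your $A$), partition $\ca{F}$ by the trace on $X\setminus Y$, and show each nonempty-trace class is $(d-1)$-simplex-free by completing a putative $(d-1)$-simplex to a $d$-simplex using $Y$ itself as the $(d+1)$-st set. The only detail worth making explicit is that $\ca{F}_S' \subseteq A^{(\le n-k-|S|)}$ because the definition of $g(n,d,k)$ forces every member of $\ca{F}$ to have size at most $n-k$, which is exactly what licenses the bound $f(n-k,d-1,|S|)$ rather than $f(n-k,d-1)$.
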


\begin{proof}
Let $\ca{F} \subseteq X^{(\le n-k)}$ be $d$-simplex-free with $|\ca{F}| = g(n,d,k)$ and $\max_{A \in \ca{F}} |A| = n-k$, and fix a $Y \in \ca{F}$ with $|Y| = n-k$.  Let $Z = X \setminus Y$, and for every $W \subseteq Z$, let $\ca{F}_W = \{A \cap Y : A \in \ca{F}, A \cap Z = W\}$, so $|\ca{F}| = \sum_{W \subseteq Z} |\ca{F}_W|$.

Clearly $\ca{F}_{\emptyset}$ must be $d$-simplex-free, so $|\ca{F}_{\emptyset}|\le f(n-k,d)$.  Now, fix any nonempty $W \subseteq Z$. If $\ca{F}_W$ contains a $(d-1)$-simplex $A_1,\ldots,A_d$, then letting $B_i = A_i \cup W$ for $1 \le i \le d$ and $B_{d+1} = Y$, the $B_i$ form a $d$-simplex in $\ca{F}$, a contradiction.  By the choice of $Y$, it follows that every $A \in \ca{F}_W$ has size at most $n-k-|W|$, and hence $|\ca{F}_W| \le f(n-k,d-1,|W|)$ and the result follows.\\
\end{proof}

We next show the following simple result for the $d=1$ case, whose proof we include for completeness.

\begin{claim}\label{clm:d=1}
For every $n \ge k \ge 1$, $g(n,1,k)=f(n,1,k) = 2^{n-1} - \sum_{j=1}^{k-1} \bin{n-1}{j}$.
\end{claim}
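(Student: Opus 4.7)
The plan is to match $f(n,1,k)$ from below with an explicit construction (which simultaneously gives $g(n,1,k) = f(n,1,k)$) and to prove the upper bound by slicing $\ca{F}$ into levels, combining the Erd\H os--Ko--Rado theorem on the small levels with a complementary-pairing argument on the middle levels.

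For the lower bound, I would fix any $x \in X$ and take $\ca{F}^* = \{\emptyset\} \cup \{A \subseteq X : x \in A,\ 1 \le |A| \le n-k\}$. Any two nonempty members of $\ca{F}^*$ share $x$, so $\ca{F}^*$ is $1$-simplex-free; using the symmetry $\binom{n-1}{j} = \binom{n-1}{n-1-j}$, a direct count gives $|\ca{F}^*| = 1 + \sum_{i=0}^{n-k-1}\binom{n-1}{i} = 2^{n-1} - \sum_{j=1}^{k-1}\binom{n-1}{j}$. Since $\ca{F}^*$ contains sets of size exactly $n-k$, this shows $g(n,1,k) \ge 2^{n-1} - \sum_{j=1}^{k-1}\binom{n-1}{j}$.

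For the upper bound, let $\ca{F} \subseteq X^{(\le n-k)}$ be any $1$-simplex-free family and write $\ca{F}^{(i)} = \ca{F} \cap X^{(i)}$. Clearly $|\ca{F}^{(0)}| \le 1$. For each $1 \le i \le k-1$, the level $\ca{F}^{(i)}$ is a classical intersecting $i$-uniform family, and the hypothesis $i \le n/2$ holds in both regimes: when $n \ge 2k$ we have $i \le k-1 < n/2$, while when $n \le 2k$ the level $\ca{F}^{(i)}$ is empty unless $i \le n-k$, in which case $i \le n-k \le n/2$. Hence the Erd\H os--Ko--Rado theorem yields $|\ca{F}^{(i)}| \le \binom{n-1}{i-1}$. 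For the middle range $k \le i \le n-k$ (empty unless $n \ge 2k$), I would pair each $A$ of size $i$ with its complement $X \setminus A$, whose size $n-i$ also lies in $[k,n-k]$; being disjoint and nonempty, they cannot coexist in $\ca{F}$, so the middle range contributes at most $\tfrac{1}{2}\sum_{i=k}^{n-k}\binom{n}{i}$ sets in total.

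Summing these level bounds and applying Pascal's identity $\binom{n}{i} = \binom{n-1}{i-1} + \binom{n-1}{i}$ inside the middle-range sum collapses the total to $1 + \sum_{j=0}^{n-k-1}\binom{n-1}{j}$, matching the lower bound. This yields $f(n,1,k) \le 2^{n-1} - \sum_{j=1}^{k-1}\binom{n-1}{j}$, and combined with the lower bound gives $g(n,1,k) = f(n,1,k) = 2^{n-1} - \sum_{j=1}^{k-1}\binom{n-1}{j}$. The main bookkeeping point is unifying the two regimes $n \le 2k$ (only the EKR part contributes) and $n > 2k$ (both parts needed), and verifying that the algebra produces the same closed form in each case; the combinatorial ideas themselves are standard.
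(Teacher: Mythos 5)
Your proof is correct and follows essentially the same route as the paper's: the lower bound comes from the star $X^{(\le n-k)}_x \cup \{\emptyset\}$, and the upper bound combines Erd\H os--Ko--Rado on the levels below $k$ with a complement-pairing argument on the levels in $[k,n-k]$ (the paper organizes the pairing as $|\ca{F}^i|+|\ca{F}^{n-i}|\le\binom{n}{i}$ for complementary levels, and handles $i=n/2$ via EKR, but this is only a cosmetic difference). The level-by-level bookkeeping and the collapse via Pascal's identity check out in both regimes.
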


\begin{proof}
Let $\ca{F} \subseteq X^{(\le n-k)}$ be $1$-simplex-free with $\max_{A \in \ca{F}} |A| = n - k$ and $|\ca{F}| = g(n,1,k)$.
Let $\ca{P}$ be a partition of $\{1,2,\ldots,n-k\}$ into singletons $\{i\}$ with $i \le \frac{n}{2}$ and pairs $\{i,n-i\}$ with $i < n-i$.  Finally, let $\ca{F}^i = \ca{F} \cap X^{(i)}$.

For every singleton $\{i\} \in \ca{P}$, by Erd\H os-Ko-Rado, $|\ca{F}^i| \le \bin{n-1}{i-1} = \bin{n-1}{n-i}$.  For every pair $\{i,n-i\} \in \ca{P}$, as $\ca{F}\setminus \{\emptyset\}$ is intersecting, $|\ca{F}^i| + |\ca{F}^{n-i}| \le \bin{n}{i} = \bin{n-1}{i-1} + \bin{n-1}{i} = \bin{n-1}{n-i} + \bin{n-1}{n-(n-i)}$. As $|\ca{F}^0| \le 1$, it follows that \[|\ca{F}| \le \sum_{i=0}^{n-k} |\ca{F}^i| \le 1 + \sum_{i=1}^{n-k} \bin{n-1}{n-i} = \bin{n-1}{0} + \sum_{i=k}^{n-1} \bin{n-1}{i} = 2^{n-1} - \sum_{i=1}^{k-1} \bin{n-1}{i}.\]

To see that equality holds in the bound, let $\ca{F} = X^{(\le n-k)}_x \cup \{\emptyset\}$ for any $x \in X$.  We also note that for $k \ge 2$, equality implies $k=n$ or $|\ca{F}^1| = 1$ and hence this is the unique extremal family.\\
\end{proof}

Next, we prove a slight strengthening of Milner's result on triangle-free set systems.

\begin{lemma}\label{lem:milner}
For all $n \ge 1$,
\begin{eqnarray}
f(n,2,1) &=& \label{eq:fn21}2^{n-1} + \bin{n-1}{1},\\
f(n,2,2) &\le& \label{eq:fn22} 2^{n-1} + 1, \text{ and}\\
f(n,2,3) &\le& \label{eq:fn23} 2^{n-1}.
\end{eqnarray}
In particular, $f(n,2) = 2^{n-1} + \bin{n-1}{0} + \bin{n-1}{1}$.  Moreover, if $\ca{F}\subseteq X^{(\le n)}$ is triangle-free and $|\ca{F}| = f(n,2)$, then $\ca{F} = X_x^{(\le n)} \cup (X\setminus \{x\})^{(\le 1)}$ for some $x \in X$.
\end{lemma}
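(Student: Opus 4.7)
The plan is to induct strongly on $n$, with a couple of small base cases ($n \le 3$) dispatched by inspection. For the inductive step, since $f(n,2,k)=\max_{j\ge k}g(n,2,j)$, the three upper bounds (\ref{eq:fn21})--(\ref{eq:fn23}) reduce to bounding each $g(n,2,j)$ via Lemma \ref{lem:ineq} with $d=2$. Plugging in the inductive form of (\ref{eq:fn21}) for $f(n-j,2)$ and the exact formula of Claim \ref{clm:d=1} for $f(n-j,1,i)$, the $2^{n-j-1}$-terms aggregate (using $\sum_{i=0}^{j}\bin{j}{i}=2^j$) to exactly $2^{n-1}$, leaving a remainder
\[
(n-j) - \sum_{i=2}^{j}\bin{j}{i}\sum_{\ell=1}^{i-1}\bin{n-j-1}{\ell}.
\]
A short computation shows this equals $n-1$ for $j=1$, $1$ for $j=2$, and is non-positive for $j\ge 3$ once $n$ is sufficiently large; any residual small-$n$ cases are covered by the trivial $|X^{(\le n-j)}|\le 2^{n-1}$. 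This establishes the three inequalities.

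For equality in (\ref{eq:fn21}) I would exhibit $\ca{F}=X_x^{(\le n-1)}\cup(X\setminus\{x\})^{(\le 1)}$, which has exactly $2^{n-1}+(n-1)$ members. Triangle-freeness follows because any triangle has empty common intersection, so one member avoids $x$ and thus lies in $(X\setminus\{x\})^{(\le 1)}$; being forced to meet the other two nontrivially, it must be a singleton $\{y\}$, but then $y$ lies in all three members, a contradiction. Combined with $f(n,2)=1+f(n,2,1)$ (noted just before Lemma \ref{lem:ineq}), this yields the stated Milner bound.

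For uniqueness, let $\ca{F}$ be extremal for $f(n,2)$. Then $X\in\ca{F}$ (else $|\ca{F}|\le f(n,2,1)<f(n,2)$), so $\ca{F}':=\ca{F}\setminus\{X\}$ is extremal for $f(n,2,1)$; bounds (\ref{eq:fn22})--(\ref{eq:fn23}) force $\ca{F}'$ to contain some $Y$ with $|Y|=n-1$. Writing $Z=X\setminus Y=\{z\}$ and applying Lemma \ref{lem:ineq}'s decomposition, equality forces $|\ca{F}'_\emptyset|=f(n-1,2)$ and $|\ca{F}'_Z|=f(n-1,1,1)=2^{n-2}$. By induction $\ca{F}'_\emptyset = Y_x^{(\le n-1)}\cup(Y\setminus\{x\})^{(\le 1)}$ for some $x\in Y$. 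I would then pin down $\ca{F}'_Z$ by triangle-freeness of $\ca{F}$: any $B\in\ca{F}'_Z$ with $|B|\ge 2$ and $x\notin B$ produces the triangle $\{x,y_1\},\{x,y_2\},B\cup\{z\}$ in $\ca{F}$ for distinct $y_1,y_2\in B$; and any singleton $\{y\}\in\ca{F}'_Z$ with $y\ne x$ would, by the intersecting property combined with the bound $|\ca{F}'_Z|=2^{n-2}$, force $\ca{F}'_Z=Y_y^{(\le n-2)}\cup\{\emptyset\}$, whereupon $\{y,y_2\}\in\ca{F}'_Z$ for $y_2\ne y,x$ yields the triangle $\{x,y\},\{x,y_2\},\{y,y_2,z\}$ in $\ca{F}$. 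These obstructions leave $\ca{F}'_Z=Y_x^{(\le n-2)}\cup\{\emptyset\}$, and reassembling gives $\ca{F}=X_x^{(\le n)}\cup(X\setminus\{x\})^{(\le 1)}$. This last step is the main obstacle: the extremal intersecting family in Claim \ref{clm:d=1} is not unique, so the coupling between $\ca{F}'_\emptyset$ and $\ca{F}'_Z$ via triangle-freeness of the reassembled $\ca{F}$ is essential for forcing a common star element $x$ on both sides of the partition.
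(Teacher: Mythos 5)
Your proof is correct and follows essentially the same route as the paper: induction on $n$, the bound of Lemma \ref{lem:ineq} combined with Claim \ref{clm:d=1} to dispose of each $g(n,2,j)$ (with $j=1,2$ giving the remainders $n-1$ and $1$ and $j\ge 3$ giving a non-positive remainder), and the same decomposition over a set $Y$ of size $n-1$ with triangle obstructions to force a common star element for the uniqueness claim. The one imprecision is that the leftover cases for $j \ge 3$ are those with $n-j \le 1$ rather than ``small $n$'', but the trivial bound $|X^{(\le n-j)}| \le 2^{n-1}$ you invoke is exactly what covers them, matching the paper's handling of $k = n-1$.
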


\begin{proof}
Our proof is by induction on $n$; it is easy to verify for $n \le 3$, so suppose $n \ge 4$.  As $g(n,2,n) = 1$, let $1 \le k \le n-1$.  Applying Lemma \ref{lem:ineq} and Claim \ref{clm:d=1},
\begin{eqnarray}
g(n,2,k) &\le&\nonumber f(n-k,2) + \sum_{i=1}^{k} \bin{k}{i}f(n-k,1,i)\\
&\le&  \nonumber f(n-k,2) + \sum_{i=1}^k\bin{k}{i} \lp 2^{n-k-1} - \sum_{j=1}^{i-1} \bin{n-k-1}{j} \rp\\
&\le& \nonumber f(n-k,2) + (2^k - 1)2^{n-k-1} - \bin{k}{2}\bin{n-k-1}{1}\\
&=&  \label{eq:ineqd2} 2^{n-1} + \bin{n-k-1}{0} + \bin{n-k-1}{1} \lp 1 - \bin{k}{2} \rp.
\end{eqnarray}

From \eqref{eq:ineqd2}, it follows that $g(n,2,k) \le 2^{n-1} + 1$ for $2 \le k \le n-1$, with equality only possible at $k=2$ or $k=n-1$: as $n \ge 4$, $n-1 > n/2$, so $g(n,2,n-1) \le 2^{n-1}$ and \eqref{eq:fn22} and \eqref{eq:fn23} follow.  The upper bound in \eqref{eq:fn21} also follows from \eqref{eq:ineqd2}, and the lower bound from the conjectured extremal family.

Suppose now that $\ca{F}\subseteq X^{(\le n)}$ is a triangle-free family with $|\ca{F}| = 2^{n-1} + \bin{n-1}{0} + \bin{n-1}{1} > 1 + f(n,2,2)$: it follows that there is a $Y \in \ca{F}$ with $|Y| =n-1$.  Let $z$ be the unique element in $X\setminus Y$.
Define $\ca{F}^1 = \ca{F} - z$ and $\ca{F}^2 = \{A \cap Y :  A \in \ca{F}_z\}$.  As equality holds in \eqref{eq:fn21} (with $k=1$), it follows from the proof of Lemma \ref{lem:ineq} that $\ca{F}^1$ is triangle-free, $\ca{F}^2$ is $1$-simplex-free, $|\ca{F}^1| = f(n-1,2)$ and $|\ca{F}^2| = 1 + f(n-1,1,1) = f(n-1,1)$.  By the induction hypothesis,  $\ca{F}^1 = Y^{(\le n-1)}_y \cup (Y \setminus \{y\})^{(\le 1)}$ for some $y \in Y$.

Suppose there is an $A \in \ca{F}$ with $|A| \ge 2$ and $y \notin A$: then $z \in A$, so $A = \{z,w_1,w_2,\ldots,w_s\}$ for some $s \ge 1$.  If $s \ge 2$, then the sets $\{y,w_1\}, \{y,w_2\}$ and $A$ form a triangle in $\ca{F}$, a contradiction.  If $s = 1$, then $\{w_1\} \in \ca{F}^2$, implying that $\ca{F}^2 = Y^{(\le n-1)}_{w_1} \cup \{\emptyset\}$. As $|Y| \ge 3$, let $w_2 \in Y \setminus \{w_1,y\}$: then $\{z,w_1,y\},\{z,w_1,w_2\},\{w_2,y\}$ lie in $\ca{F}$ and form a triangle, a contradiction.  Therefore $\ca{F} \subseteq  X_y^{(\le n)} \cup (X\setminus \{y\})^{(\le 1)}$ and hence equality holds.\\
\end{proof}

Now that the pieces are in place, we prove the main result.

\begin{proof}[Proof of Theorem \ref{thm:mainresult}]

We use the same inductive approach as in the proof of Lemma \ref{lem:milner}.  We note that the result is trivial for $n< 4$, and for $n=4$ the only restriction is that a single $3$-element set must be missing.  Therefore, assume $n \ge 5$: let $\ca{F} \subseteq X^{(\le n)}$ be $3$-simplex-free with $|\ca{F}| \ge 2^{n-1} + \bin{n-1}{0} + \bin{n-1}{1} + \bin{n-1}{2}$.  Let $Y \in \ca{F}\setminus \{X\}$ have maximum size, and let $k =n- |Y|$.
Then by Lemmas \ref{lem:ineq} and \ref{lem:milner},
\begin{eqnarray*}
|\ca{F}|-1 &\le & g(n,3,k)\\
&\le& f(n-k,3) + \sum_{i=1}^k \bin{k}{i} f(n-k,2,i)\\
&\le& f(n-k,3) + (2^k - 1)2^{n-k-1} + \bin{k}{1} \bin{n-k-1}{1} + \bin{k}{2}\\
&=& f(n-k,3) + (2^k - 1)2^{n-k-1} + \bin{n-1}{2} - \bin{n-k-1}{2}\\
&=& 2^{n-1} + \bin{n-k-1}{0} + \bin{n-k-1}{1} + \bin{n-1}{2}\\
&=& 2^{n-1} + \bin{n-k}{1} + \bin{n-1}{2},
\end{eqnarray*} which by our lower bound on $|\ca{F}|$ implies equality holds throughout and $k=1$.

Let $z$ be the unique element in $X\setminus Y$ and let $\ca{F}^1 = \ca{F} - z$ and $\ca{F}^2 = \{A \setminus \{z\}: A \in \ca{F}_z\}$: then $\ca{F}^1$ is $3$-simplex-free and of size $f(n-1,3)$, and $\ca{F}^2$ is triangle-free and of size $f(n-1,2,1) + 1= f(n-1,2)$.  By Lemma \ref{lem:milner} and the induction hypothesis, there exist $y_1,y_2 \in Y$ such that $\ca{F}^1 = Y_{y_1}^{(\le n-1)} \cup (Y\setminus \{y_1\})^{(\le 2)}$ and $\ca{F}^2 = Y_{y_2}^{(\le n-1)} \cup (Y\setminus \{y_2\})^{(\le 1)}$.
As every set in $\ca{F}^2$ of size at most $1$ corresponds to a set of size at most $2$ in $\ca{F}$, it suffices to show that $y_1=y_2$, so suppose otherwise.  As $n \ge 5$, $|Y| \ge 4$, so let $w_1,w_2 \in Y \setminus \{y_1,y_2\}$: then the sets $\{y_1,y_2,w_1\}, \{y_1,y_2,w_2\}, \{y_1,w_1,w_2\}$ and $\{z,y_2,w_1,w_2\}$ all lie in $\ca{F}$ and form a $3$-simplex, a contradiction.\\
\end{proof}

\section{Concluding Remarks}

Complications arise in attempting to extend this method to forbidding $d$-simplices with $d \ge 4$, the chief among them following from the fact that for $k \ge 2$, \eqref{eq:mainineq} is not, in general, sharp.  To see this and illustrate the difficulty with the $d=4$ case, note that by the extremal family, $f(n,3,2) \ge 2^{n-1} + \bin{n-1}{2}$ for $n \ge 4$.  With similar calculations as above, this implies the best upper bound on $g(n,4,2)$ guaranteed by \eqref{eq:mainineq} is $2^{n-1} + \bin{n-1}{2} + \bin{n-1}{3} + \bin{n-3}{2}$, which is greater than $2^{n-1} + \bin{n-1}{1} + \bin{n-1}{2} + \bin{n-1}{3}$ for all $n \ge 8$.

However, we suspect that, in general and for most $k$, the family $X_x^{(\le n-k)} \cup (X \setminus \{x\})^{(\le d-1)}$ determines $f(n,d,k)$ and $g(n,d,k)$:

\begin{conjecture}\label{conj:main}
For $d \ge 2$ and $1 \le k \le n-d-1$, if $\ca{F} \subseteq X^{(\le n-k)}$ is $d$-simplex-free, then
\[|\ca{F}| \le 2^{n-1} + \sum_{i=0}^{d-1} \bin{n-1}{i} - \sum_{i=0}^{k-1} \bin{n-1}{i},\]
with equality if and only if $\ca{F} = X_x^{(\le n-k)} \cup (X \setminus \{x\})^{(\le d-1)}$ for some $x \in X$.
\end{conjecture}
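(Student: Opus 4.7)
The plan is to prove Conjecture \ref{conj:main} by induction on $d$, using Lemma \ref{lem:milner} and Theorem \ref{thm:mainresult} as base cases, and within each $d$ by induction on $n$. Given a $d$-simplex-free $\ca{F} \subseteq X^{(\le n-k)}$ meeting the conjectured bound, fix $Y \in \ca{F}$ of maximum size (so $|Y| = n-k$, else the inductive hypothesis applied to a larger $k$ closes the case), set $Z = X \setminus Y$, and decompose as in Lemma \ref{lem:ineq} via $\ca{F}_W = \{A \cap Y : A \in \ca{F},\, A \cap Z = W\}$ for $W \subseteq Z$.

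For $k = 1$ the argument mirrors that of Theorem \ref{thm:mainresult} directly: Lemma \ref{lem:ineq} produces exactly the conjectured bound after substituting $f(n-1,d)$ (from induction on $n$) and $f(n-1, d-1, 1)$ (from induction on $d$). Equality forces $\ca{F} - z$ and $\{A \setminus \{z\} : A \in \ca{F}_z\}$ to be the extremal families with respective centers $y_1, y_2 \in Y$, and $y_1 = y_2$ is enforced by exhibiting a $d$-simplex in the style of the four-set construction at the end of the proof of Theorem \ref{thm:mainresult}, now involving $d + 1$ sets drawn from $\{y_1, y_2, w_1, \ldots, w_{d-1}\}$ for $w_1, \ldots, w_{d-1} \in Y \setminus \{y_1, y_2\}$ (possible since $|Y| \ge d + 1$), with exactly one of them also containing $z$.

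For $k \ge 2$ the bound from Lemma \ref{lem:ineq} is strictly larger than the conjectured value, and a refined structural argument is needed. The refinement I would pursue is: if all $\ca{F}_{W'}$ for $W' \subsetneq W$ are at their extremal sizes with a common center $x \in Y$, then $\ca{F}_W$ cannot contain any set $C \subseteq Y \setminus \{x\}$ with $|C| \ge d - |W|$. Indeed, picking $d$ distinct elements $e_1, \ldots, e_d \in C \cup W$, the sets $A_i = (\{e_1, \ldots, e_d\} \setminus \{e_i\}) \cup \{x\}$ for $i = 1, \ldots, d$, together with $A_{d+1} = C \cup W \in \ca{F}$, form a $d$-simplex: $\cap_{i=1}^d A_i = \{x\}$ is killed by $A_{d+1}$, and each $A_i \in \ca{F}_{W \cap A_i}$ with $W \cap A_i \subsetneq W$, so its presence in $\ca{F}$ follows from the assumed extremality. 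For $d = 4, k = 2, |C| = 3$ this is witnessed by the five sets $\{c_2,c_3,z_1,x\}, \{c_1,c_3,z_1,x\}, \{c_1,c_2,z_1,x\}, \{c_1,c_2,c_3,x\}, \{z_1,z_2,c_1,c_2,c_3\}$. Summing over the forbidden sets $C$ recovers exactly the deficit $\sum_{j=2}^k \binom{k}{j} \sum_{i=d-j}^{d-2} \binom{n-k-1}{i}$ between the naive bound and the conjectured value.

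The main obstacle will be the stability layer: the forbidden-simplex argument only triggers when many of the $\ca{F}_{W'}$ are fully extremal with a common center, so one must simultaneously track slack in each $\ca{F}_{W'}$ and argue that such slack is at least as large as the additional allowance it grants to $\ca{F}_W$. Managing these trade-offs uniformly across the $2^k$ choices of $W$, and propagating them through to force a single global center in the equality characterization, is where the naive inductive approach risks breaking down for $d \ge 4$, and is likely the point at which one would have to invoke a Keevash--Mubayi-style stability result \cite{KM} or develop a cleaner refinement of Lemma \ref{lem:ineq}.
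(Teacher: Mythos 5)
The statement you are trying to prove is Conjecture~\ref{conj:main}, which the paper explicitly leaves open: there is no proof in the paper to compare against, and the Concluding Remarks identify precisely the obstruction you run into, namely that for $k \ge 2$ the bound \eqref{eq:mainineq} is not sharp (the paper's example: the best bound it gives on $g(n,4,2)$ is $2^{n-1}+\bin{n-1}{2}+\bin{n-1}{3}+\bin{n-3}{2}$, exceeding the conjectured value for all $n \ge 8$). Your proposal is therefore a program rather than a proof, and you say as much in your final paragraph; the ``stability layer'' you defer is not a technicality but the entire open content of the problem. Note also that the $k=1$ case cannot be closed independently of $k \ge 2$ as you suggest: since $f(n,d,1)=\max_{j\ge 1}g(n,d,j)$, and since the largest set of a given family may have any size, bounding $f(n,d,1)$ already requires bounding $g(n,d,j)$ for every $j \ge 2$, which is exactly where the method breaks for $d \ge 4$.

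Two concrete flaws in the refinement you propose. First, there is a circularity at the boundary case $|C| = d - |W|$: then $\{e_1,\ldots,e_d\}$ must be all of $C \cup W$, so for each $e_i \in C$ the set $A_i = (\{e_1,\ldots,e_d\}\setminus\{e_i\})\cup\{x\}$ contains all of $W$, hence $A_i \cap Z = W$ and $A_i$ belongs to $\ca{F}_W$ itself, not to some $\ca{F}_{W'}$ with $W' \subsetneq W$; its membership in $\ca{F}$ cannot be deduced from the assumed extremality of the proper sub-traces. (Your $d=4$, $k=2$ example avoids this only because $|C|=3 > d-|W|=2$, which lets you omit $z_2$ from the $e_i$.) Yet the sets $C$ with $|C| = d-|W|$ exactly contribute the term $i=d-j$ of your deficit sum, so even granting everything else the deficit is not fully recovered. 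Second, the claim that ``summing over the forbidden sets $C$ recovers exactly the deficit'' conflates two different things: knowing that $\ca{F}_W$ is $(d-1)$-simplex-free \emph{and} omits all $C \subseteq Y\setminus\{x\}$ with $|C| \ge d-|W|$ does not immediately give $|\ca{F}_W| \le f(n-k,d-1,|W|) - \sum_{l \ge d-|W|}\bin{n-k-1}{l}$; that is a new extremal statement requiring its own proof, since $\ca{F}_W$ need not be a subfamily of the unique extremal configuration. Until you have an unconditional way to trade slack in the $\ca{F}_{W'}$ against excess in $\ca{F}_W$ --- uniformly over the $2^k$ traces and compatibly with the common-center requirement --- the argument does not yield the conjectured bound, let alone the uniqueness of the extremal family.
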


\noindent We mention that the proof of Theorem \ref{thm:km} yields that Conjecture \ref{conj:main} holds for $k \le d$ provided $n$ is sufficiently large.

\end{document}